\documentclass[a4paper,12pt]{article}
\usepackage{amssymb,amsmath,amsthm,latexsym}
\usepackage{amsfonts}
	\usepackage{amsfonts}
\usepackage{graphicx}
\usepackage[pdftex,bookmarks,colorlinks=false]{hyperref}
\usepackage{verbatim}
\usepackage{caption}
\usepackage{subcaption}
\usepackage[hmargin=1.2in,vmargin=1.2in]{geometry}

\newtheorem{theorem}{Theorem}[section]

\newtheorem{definition}[theorem]{Definition}

\newtheorem{problem}{Problem}
\newtheorem{proposition}[theorem]{Proposition}
\newtheorem{remark}[theorem]{Remark}

\newtheorem{illustration}[theorem]{Illustration}

\setlength{\parskip}{2.5pt}

\title{\bf \sc A Study on Prime Arithmetic Integer Additive Set-Indexers of Graphs}
\author{{\bf N K Sudev} \\{\small Department of Mathematics, Vidya Academy of Science \& Technology} \\ {\small Thalakkottukara, Thrissur - 680501, Kerala, India.}\\ {\small email: {\em sudevnk@gmail.com}}\\ \\ {\bf K A Germina} \\{\small Department of Mathematics, School of Mathematical \& Physical Sciences} \\ {\small Central University of Kerala, Kasaragod - 671316, Kerala, India.}\\ {\small email: {\em srgerminka@gmail.com}}}

\date{}
\begin{document}
\maketitle

\begin{abstract}
 Let $\mathbb{N}_0$ be the set of all non-negative integers and $\mathcal{P}(\mathbb{N}_0)$ be its power set. An integer additive set-indexer (IASI) is defined as an injective function $f:V(G)\to \mathcal{P}(\mathbb{N}_0)$ such that the induced function $f^+:E(G) \to \mathcal{P}(\mathbb{N}_0)$ defined by $f^+(uv) = f(u)+ f(v)$ is also injective, where $\mathbb{N}_0$ is the set of all non-negative integers. A graph $G$ which admits an IASI is called an IASI graph. An IASI of a graph $G$ is said to be an arithmetic IASI if the elements of the set-labels of all vertices and edges of $G$ are in arithmetic progressions. In this paper, we discuss about a particular type of arithmetic IASI called prime arithmetic IASI. 
\end{abstract}
\textbf{Key words}: Integer additive set-indexers, arithmetic integer additive set-indexers,  prime arithmetic integer additive set-indexers.\\
\textbf{AMS Subject Classification : 05C78}

\section{Introduction}
For all  terms and definitions, not defined in this paper, we refer to \cite{FH} and for more about graph labeling, we refer to \cite{JAG}. Unless mentioned otherwise, all graphs considered here are simple, finite and have no isolated vertices.

Let $\mathbb{N}_0$ denote the set of all non-negative integers. For all $A, B \subseteq \mathbb{N}_0$, the sum set of these sets, denoted by  $A+B$, is defined by $A + B = \{a+b: a \in A, b \in B\}$. If either $A$ or $B$ is infinite, then $A+B$ is also infinite. Hence, all sets mentioned in this paper are finite sets of non-negative integers. We denote the cardinality of a set $A$ by $|A|$.

An {\em integer additive set-indexer} (IASI, in short) is defined in \cite{GA} as an injective function $f:V(G)\to \mathcal{P}(\mathbb{N}_0)$ such that the induced function $f^{+}:E(G) \rightarrow \mathcal{P}(\mathbb{N}_0)$ defined by $f^{+} (uv) = f(u)+ f(v)$ is also injective.  A graph $G$ which admits an IASI is called an IASI graph.  An IASI is said to be {\em $k$-uniform} if $|f^{+}(e)| = k$ for all $e\in E(G)$. That is, a connected graph $G$ is said to have a $k$-uniform IASI if all of its edges have the same set-indexing number $k$. The cardinality of the labeling set of an element (vertex or edge) of a graph $G$ is called the {\em set-indexing number} of that element. The vertex set $V$ of a graph $G$ is defined to be {\em $l$-uniformly set-indexed}, if all the vertices of $G$ have the same set-indexing number $l$. An element of $G$ whose set-indexing number $1$ is called a {\em mono-indexed element} of $G$.


%
%
By the term an AP-set, we mean a set whose elements are in arithmetic progression. In this paper, we study the characteristics given graphs, where the set-labels of whose vertices and edges are AP-sets. Since the elements of the set-labels of all elements of $G$ are in arithmetic progression, all set-labels must contain at least three elements. The common difference of the set-label of an element $x$ of a graph $G$ is called the {\em deterministic index} of $x$ and is denoted by $\varpi(x)$. The integer ratio between the deterministic indices of the end vertices of an edge $e$ in $G$ is called the {\em deterministic ratio} of $e$.

The following notions are introduced in \cite{GS7}.

Let $f:V(G)\to \mathcal{P}(\mathbb{N}_0)$ be an IASI on $G$. For any vertex $v$ of $G$, if $f(v)$ is an AP-set, then $f$ is called a {\em vertex-arithmetic IASI} of $G$. A graph that admits a vertex-arithmetic IASI is called a {\em vertex-arithmetic IASI graph}. For an IASI $f$ of $G$, if $f^+(e)$ is an AP-set, for all $e\in E(G)$, then $f$ is called an {\em edge-arithmetic IASI} of $G$. A graph that admits an edge-arithmetic IASI is called an {\em edge-arithmetic IASI graph}. An IASI is said to be an {\em arithmetic integer additive set-indexer} (AIASI, in short) if it is both vertex-arithmetic and edge-arithmetic. That is, an AIASI is an IASI $f$, under which the set-labels of all elements of a given graph $G$ are AP-sets. A graph that admits an AIASI is called an {\em AIASI graph}. 

\begin{theorem}\label{T-AIASI-g}
\cite{GS7} A graph $G$ admits an AIASI if and only if for any two adjacent vertices in $G$, the deterministic index of one vertex is a positive integral multiple of the deterministic index of the other vertex and this positive integer is less than or equal to the set-indexing number of the latter.
\end{theorem}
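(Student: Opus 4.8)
The plan is to reduce the statement to a single-edge question and then to analyze the sum set of two arithmetic-progression sets. Fix an edge $uv$ and, using that the labeling is vertex-arithmetic, write $f(u)=\{a+i\,\varpi(u):0\le i\le m-1\}$ and $f(v)=\{b+j\,\varpi(v):0\le j\le n-1\}$, where $m=|f(u)|$ and $n=|f(v)|$ are the respective set-indexing numbers; recall that $m,n\ge 3$, since every AP-set considered here has at least three elements. Without loss of generality I assume $\varpi(v)\le\varpi(u)$. Because translating a set by the constant $a+b$ does not affect whether it is an AP-set, it suffices to decide when $S=\{i\,\varpi(u)+j\,\varpi(v):0\le i\le m-1,\,0\le j\le n-1\}$ is an AP-set; the whole theorem then follows by quantifying the resulting condition over every edge of $G$.

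For sufficiency I would assume $\varpi(u)=k\,\varpi(v)$ with $k$ a positive integer and $k\le n$. Substituting gives $S=\{(ik+j)\,\varpi(v)\}$, so the problem becomes showing that the integer coefficients $ik+j$ fill an initial segment of $\mathbb{N}_0$. For fixed $i$ the coefficients run through the block $\{ik,ik+1,\dots,ik+(n-1)\}$, and the inequality $k\le n$ guarantees that consecutive blocks (for $i$ and $i+1$) abut or overlap, leaving no gaps. Hence the coefficients are precisely $\{0,1,\dots,(m-1)k+n-1\}$ and $S$ is an AP-set with common difference $\varpi(v)$, so $f$ is edge-arithmetic at $uv$.

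For necessity I would start from the assumption that $S$ is an AP-set and first pin down its common difference. Because $m,n\ge 2$, both $0$ and $\varpi(v)$ lie in $S$, and since $\varpi(v)\le\varpi(u)$ the value $\varpi(v)$ is the smallest positive element of $S$; as the two least elements of an AP-set differ by exactly the common difference, that difference must be $\varpi(v)$. Consequently every element of $S$ is a multiple of $\varpi(v)$, and in particular $\varpi(u)\in S$ forces $\varpi(u)=k\,\varpi(v)$ for some positive integer $k$. It then remains to bound $k$: if $k>n$, then the coefficient $n$ cannot be written as $ik+j$ with $0\le i\le m-1$ and $0\le j\le n-1$ (taking $i=0$ would need $j=n$, while $i\ge 1$ gives $ik\ge k>n$), so $n\,\varpi(v)$ is a missing term strictly between two elements of $S$, contradicting that $S$ is a gapless AP with difference $\varpi(v)$. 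Therefore $k\le n$, which is exactly the asserted condition with $v$ playing the role of ``the latter'' vertex.

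The step I expect to be the main obstacle is the necessity direction, and within it the clean identification of the common difference together with the gap argument bounding $k$. The degenerate case $\varpi(u)=\varpi(v)$ (which gives $k=1\le n$ automatically) and the verification that $m,n\ge 3$, so that $\varpi(u)$ and $\varpi(v)$ genuinely occur as elements of $S$, must be handled explicitly in order for the ``smallest positive element'' reasoning to be valid.
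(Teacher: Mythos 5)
The paper itself gives no proof of this theorem --- it is quoted from \cite{GS7} --- so there is no in-paper argument to compare against; I can only assess your proposal on its own terms, and it is correct. The reduction to the translated sum set $S=\{i\,\varpi(u)+j\,\varpi(v)\}$ is the natural one; the sufficiency direction's block argument (block $i$ covering $\{ik,\dots,ik+n-1\}$, with $k\le n$ forcing consecutive blocks to abut or overlap) correctly shows the coefficients fill $\{0,1,\dots,(m-1)k+n-1\}$; and the necessity direction correctly pins the common difference to $\min(\varpi(u),\varpi(v))$ via the two smallest elements, deduces divisibility from $\varpi(u)\in S$, and derives a contradiction from the missing coefficient $n$ when $k>n$ (which does lie strictly below $\max S=((m-1)k+n-1)\,\varpi(v)$, so it is a genuine gap in the alleged AP). The one point you pass over is the injectivity built into the definition of an IASI: the ``admits'' direction of the theorem requires not just that each individual edge label be an AP-set but that the translates $a,b$ can be chosen across the whole graph so that $f$ and $f^{+}$ remain injective. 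That is routine to arrange and does not affect the per-edge analysis, which is the real content, but a complete proof of the ``if'' direction should say a word about it.
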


\noindent Let $v_i$ and $v_j$ are two adjacent vertices of $G$, with deterministic indices $d_i$ and $d_j$ respectively with respect to an IASI $f$ of $G$. Then, by Theorem \ref{T-AIASI-g}, $f$ is an arithmetic IASI if and only if $d_j=k\,d_i$, where $k$ is a positive integer such that $1\le k \le |f(v_i)|$ or equivalently, the deterministic ratio of the edge $uv$ is a positive integer less than or equal to the set-indexing number of the vertex having smaller deterministic index.

\begin{theorem}\label{T-AIASI1}
\cite{GS7} Let $G$ be a graph which admits an arithmetic IASI, say $f$ and let $d_i$ and $d_j$ be the deterministic indices of two adjacent vertices $v_i$ and $v_j$ in $G$. If $|f(v_i)|\ge |f(v_j)|$, then for some positive integer $1\le k\le |f(v_i)|$ , the edge $v_iv_j$ has the set-indexing number $|f(v_i)|+k(|f(v_j)|-1)$. 
\end{theorem}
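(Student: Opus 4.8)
The plan is to work directly with explicit arithmetic-progression representations of the two vertex labels and to reduce the computation of the edge's set-indexing number $|f^+(v_iv_j)|=|f(v_i)+f(v_j)|$ to a purely combinatorial count of coefficients. Writing $m=|f(v_i)|$ and $n=|f(v_j)|$ with $m\ge n$, I would set $f(v_i)=\{a+r\,d_i : 0\le r\le m-1\}$ and $f(v_j)=\{b+s\,d_j : 0\le s\le n-1\}$. By Theorem \ref{T-AIASI-g}, in the equivalent form $d_j=k\,d_i$ with $1\le k\le |f(v_i)|$ displayed just before the statement, I take $v_i$ to be the vertex of smaller deterministic index. Substituting $d_j=k\,d_i$, every element of the sum set has the form $(a+b)+(r+ks)\,d_i$; since $d_i>0$, the map $c\mapsto(a+b)+c\,d_i$ is injective, so distinct coefficients $r+ks$ yield distinct sums. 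Hence $|f(v_i)+f(v_j)|$ equals the number of distinct values of the integer $r+ks$ as $(r,s)$ ranges over $\{0,\dots,m-1\}\times\{0,\dots,n-1\}$.

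The heart of the argument is to show that this coefficient set is exactly the block of consecutive integers $\{0,1,\dots,(m-1)+k(n-1)\}$. I would argue as follows: for each fixed $s$, as $r$ runs through $0,\dots,m-1$ the coefficient $r+ks$ runs through the $m$ consecutive integers $ks,\,ks+1,\,\dots,\,ks+(m-1)$; increasing $s$ by one shifts this length-$m$ block forward by exactly $k$. Because the arithmetic hypothesis guarantees $k\le |f(v_i)|=m$, each successive block overlaps or abuts the previous one, so the union of the $n$ blocks has no gap and is precisely the integer interval from $0$ (attained at $r=s=0$) up to $(m-1)+k(n-1)$ (attained at $r=m-1,\ s=n-1$). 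Counting the integers in this interval gives $(m-1)+k(n-1)+1=m+k(n-1)=|f(v_i)|+k(|f(v_j)|-1)$, which is exactly the asserted set-indexing number, with $k$ itself serving as the required integer in $[1,|f(v_i)|]$.

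The step I expect to be the main obstacle is the gap-free claim, that is, verifying that the shift $k$ between consecutive blocks never exceeds the block length $m$. This is precisely where the bound $k\le |f(v_i)|$ furnished by Theorem \ref{T-AIASI-g} is indispensable; everything else, namely the AP representations, the injectivity reduction to counting coefficients, and the evaluation of the interval length, is routine bookkeeping. I would close by noting that the asymmetry of the final expression tracks which endpoint carries the smaller deterministic index: the relation $d_j=k\,d_i$ fixes $v_i$ as that vertex, and the hypothesis $|f(v_i)|\ge|f(v_j)|$ records that the larger label sits on it, so that $m$ enters as the additive term and $n-1$ as the term scaled by $k$.
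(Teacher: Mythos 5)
The paper gives no proof of this statement at all---it is quoted from reference \cite{GS7} as a known result---so there is no in-text argument to compare yours against. Judged on its own terms, your proof is correct and is the natural one: reducing $|f(v_i)+f(v_j)|$ to a count of distinct coefficients $r+ks$, and using $k\le m$ to show that the $n$ translated blocks of length $m$ cover the interval $\{0,1,\dots,(m-1)+k(n-1)\}$ without gaps, is exactly the computation this formula encodes, and your identification of the gap-free step as the place where the bound $k\le |f(v_i)|$ from Theorem \ref{T-AIASI-g} is used is accurate. The one point worth flagging is the orientation issue you touch on at the end: the formula $|f(v_i)|+k(|f(v_j)|-1)$ is valid precisely when $v_i$ is the vertex of \emph{smaller deterministic index}, so that $d_j=k\,d_i$ with $k\le |f(v_i)|$; the stated hypothesis $|f(v_i)|\ge |f(v_j)|$ does not by itself force this. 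If instead $d_i=k\,d_j$ with $k\le |f(v_j)|$, the same block argument yields $|f(v_j)|+k(|f(v_i)|-1)$, which need not have the asserted form (for instance $m=4$, $n=3$, $k=2$ gives $9$, which is not $4+2k'$ for any integer $k'$). So your argument proves the theorem under the paper's standing convention $d_j=k\,d_i$ introduced just before the statement, which is clearly the intended reading; it would be worth stating that assumption explicitly rather than deriving it from the cardinality hypothesis.
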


In this paper, we introduce the notion of prime arithmetic IASI graphs and study the properties and characteristics of them.

\section{Prime AIASI Graphs}

\begin{definition}{\rm
A {\em prime arithmetic integer additive set-indexer} (prime AIASI) of a graph $G$ is an AIASI $f:V(G)\to \mathcal{P}(\mathbb{N}_0)$, where, for any two adjacent vertices in $G$ the deterministic index of one vertex is a prime integer multiple of the deterministic index of the other, where this prime integer is less than or equal to the set-indexing number of the second vertex. }
\end{definition}

In other words, an AIASI $f$ is a prime AIASI on $G$ if for any two adjacent vertices $v_i$ and $v_j$ of $G$ with the deterministic indices $d_i$ and $d_j$ respectively where $d_i\le d_j$, $d_j=p_id_i$ where $p_i$ is a prime integer such that $1< p_i\le |f(v_i)|$. 

In the following discussions, we discuss the admissibility of certain graphs. 

\begin{theorem}\label{T-PIASI-g}
A graph $G$ admits a prime AIASI if and only if it is bipartite. 
\end{theorem}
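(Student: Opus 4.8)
The plan is to prove both implications separately, with the forward direction (prime AIASI $\Rightarrow$ bipartite) resting on a multiplicativity argument around cycles and the reverse direction (bipartite $\Rightarrow$ prime AIASI) on an explicit labelling.

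For the forward direction, suppose $f$ is a prime AIASI of $G$. The decisive observation is that for every edge $v_iv_j$ the ratio of the deterministic indices of its end vertices is a prime: if $d_i \le d_j$ then $d_j = p\,d_i$ with $p$ prime and $p \ge 2$, so in particular $d_i \ne d_j$. I would then argue that $G$ contains no odd cycle. Take any cycle $v_1v_2\cdots v_nv_1$ with deterministic indices $d_1,\dots,d_n$, and for each edge write $d_{i+1}/d_i = p_i^{s_i}$, where $p_i$ is a prime and $s_i \in \{+1,-1\}$ (indices read cyclically). Since the $d_i$ return to $d_1$, the telescoping product gives $\prod_{i=1}^{n} p_i^{s_i} = 1$. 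Collecting equal primes and invoking unique factorisation, for each prime $q$ the signed exponents coming from edges with $p_i = q$ must cancel, so the number of edges of the cycle whose ratio involves $q$ is even. Summing over all primes, $n$ is even. Hence every cycle of $G$ is even and $G$ is bipartite.

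For the reverse direction, let $G$ be bipartite with bipartition $(X,Y)$. I would set the deterministic index equal to $1$ on every vertex of $X$ and equal to $2$ on every vertex of $Y$, using the prime $p=2$. Every edge then joins a vertex of deterministic index $1$ to one of deterministic index $2$, so its deterministic ratio is the prime $2$; since the AP-sets in this paper carry at least three elements, the bound $2 \le |f(v)|$ holds for each $v \in X$, and the hypothesis of Theorem \ref{T-AIASI-g} is met. Concretely, I would label each $x \in X$ by a contiguous block $\{a_x, a_x+1, \dots, a_x + l_x - 1\}$ (common difference $1$) and each $y \in Y$ by an AP $\{b_y, b_y+2, \dots, b_y + 2(m_y-1)\}$ (common difference $2$), with all sizes at least $3$. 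For an edge $xy$, the sum set is the union of the blocks $\{a_x+b_y+2j,\dots,a_x+b_y+2j+l_x-1\}$ for $0 \le j \le m_y-1$; since $l_x \ge 3 > 2$ consecutive blocks overlap, so their union is again a contiguous block and hence an AP. Thus $f$ is an AIASI, and because every deterministic ratio equals the prime $2 \le |f(\cdot)|$, it is a prime AIASI.

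The part needing the most care is verifying that this labelling is genuinely an IASI, that is, that $f$ and $f^+$ are both injective while all vertex set-labels remain AP-sets of the prescribed common differences. This is where the freedom of $\mathbb{N}_0$ is used: I would choose the base points $a_x$ and $b_y$ to grow rapidly, spaced so that distinct vertices receive distinct labels and the sums $a_x+b_y$ are pairwise distinct across edges, forcing the sum sets of distinct edges to be distinct. Apart from this bookkeeping the argument is routine; the genuinely substantive step is the unique-factorisation argument of the forward direction, which is what converts the prime-ratio constraint into the parity condition characterising bipartiteness.
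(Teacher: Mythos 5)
Your proposal is correct, and while your construction for the ``bipartite $\Rightarrow$ prime AIASI'' direction is essentially the paper's (label one part with common difference $d$ and the other with $p\,d$ for a single prime $p$ --- you take $d=1$, $p=2$ and additionally verify that the sum sets are APs and that injectivity can be arranged, which the paper leaves implicit; your explicit $p=2$ also sidesteps the paper's slightly awkward choice $p=\min_{v\in X}|f(v)|$, which need not be prime unless one arranges the set sizes accordingly), your forward direction is genuinely different. The paper argues via BFS layering from an arbitrary vertex $v$: it forms neighbourhood sets $V_1, V_2,\dots$ and asserts that no vertex of $V_2$ can be adjacent to $v$ and, more generally, that no edges join layers of the same parity, then takes the even and odd layers as the bipartition. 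The crucial assertion --- that the prime-ratio condition forbids such edges --- is stated but not justified there. Your telescoping product $\prod_i p_i^{s_i}=1$ around a cycle, combined with unique factorisation to force each prime's edge count to be even and hence the cycle length to be even, is precisely the missing justification (a triangle $v,v',w$ with $w\in V_2$ adjacent to $v$ would give $p^{\pm1}q^{\pm1}r^{\pm1}=1$ with three prime factors of exponent $\pm1$, impossible by parity). So your route buys rigor: it converts the prime-ratio hypothesis into the no-odd-cycle characterisation of bipartiteness directly, whereas the paper's layering argument is cleaner to state but rests on exactly the cancellation fact you prove. Note, incidentally, that the primality hypothesis is not strictly needed for your parity argument as stated --- you only use that each ratio is a single prime to the power $\pm1$, which is where primality (rather than mere integrality) enters, since a ratio that is a composite or equal to $1$ would break the exponent-parity count; it is worth saying this explicitly when you invoke unique factorisation.
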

\begin{proof}
Let $G$ be a bipartite graph with a bipartition $(X,Y)$. Label all the vertices in $X$ by distinct AP-sets having the same common difference $d$ and label all the vertices of $Y$ by distinct AP-sets having the same common difference $d'$, where $d'=p\,d$, where $p=\min_{v\in X}|f(v)|$ is a prime integer. This set-labeling is a prime-AIASI defined on $G$. 

Conversely, assume that $G$ admits a prime AIASI, say $f$. Let $v$ be an arbitrary vertex of $G$. Let $V_1$ denote the set of all vertices that are adjacent to $v$. Since $G$ is a prime AIASI graph, for any vertex $v'$ in $V_1$, either $\varpi(v')=p_i\, \varpi(v);~ p_i \le |f(v)|$ (or $\varpi(v)=p_j\, \varpi(v);~ p_i \le |f(v')|$), where $p_i$ (or $p_j$) is a prime integer. Now, let $V_2$ be the neighbouring set of $V_1$. Since, $G$ admits a prime AIASI, no vertices in $V_2$ can be adjacent to $v$. Let $V_3$ be the neighbouring set of the vertices of $V_2$ and as explained above no vertices of $V_1$ will be adjacent to the vertices in $V_3$. Proceeding like this in finite number of times, say $m$, we cover all the vertices of $G$. Let $X=\cup_{i=0}^m V_{2i}$ and $Y=\cup_{i=0}^mV_{2i+1}$. Clearly, $(X,Y)$ is a bipartition on $G$.
\end{proof}

The following are some illustrations to prime AIASI graphs.

\begin{remark}{\rm 
Due to Theorem \ref{T-PIASI-g}, the paths, trees, even cycles and all acyclic graphs admit prime AIASIs.}
\end{remark}

\begin{illustration}
The subdivision graph of any non-trivial graph $G$ admits a prime AIASI graph.
\end{illustration} 
A subdivision graph, denoted by $G^{\ast}$, is the graph obtained by introducing a vertex to every edge of $G$. Then, $G^{\ast}$ is a bipartite graph. Hence, by Theorem \ref{T-PIASI-g}, $G^{\ast}$ admits a prime AIASI.

\begin{illustration}
Every median graph admits a prime AIASI.
\end{illustration}
A median graph $G$ is an undirected graph in which every three vertices $u,v$ and $w$ have a unique vertex $x$ that belongs to shortest paths between each pair of $u,v$,and $w$. Clearly, $G$ is a bipartite graph and hence by Theorem \ref{T-PIASI-g}, $G$ admits a prime-AIASI.

\begin{illustration}
Every hypercube graph $Q_n$ and partial cube graph admit a prime AIASI.
\end{illustration}
Every hypercube graph $Q_n$ is a Cartesian product of two bipartite graphs and hence it is a bipartite graph. Then, by Theorem \ref{T-PIASI-g}, $Q_n$ admits a prime-AIASI. A partial cube is a graph that is an isometric subgraph of a hypercube. Since it preserves distances, it is also a bipartite graph and hence admits a prime AIASI.

\section{Dispensing Number of Certain Graph Classes}

By Theorem \ref{T-PIASI-g}, a non-bipartite graph does not admit a prime AIASI. That is, some edges of a non-bipartite graph have non-prime deterministic ratio.  Then, we define the following notion.

\begin{definition}{\rm
The minimum possible number of edges in a graph $G$ that do not have a prime deterministic ratio is called the {\em dispensing number} of $G$ and is denoted by $\vartheta(G)$.}
\end{definition}

In other words, the dispensing number of a graph $G$ is the minimum number of edges to be removed from $G$ so that it admits a prime AIASI. Hence, we have

\begin{theorem}\label{T-PAIASI1}
If $b(G)$ is the number of edges in a maximal bipartite subgraph of a graph $G$, then $\vartheta(G)= |E(G)|-b(G)$.
\end{theorem}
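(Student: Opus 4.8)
The plan is to reduce the statement entirely to the structural characterization of Theorem \ref{T-PIASI-g} and then reinterpret edge deletion in terms of bipartite subgraphs. First I would recall that, by Theorem \ref{T-PIASI-g}, a graph admits a prime AIASI if and only if it is bipartite. This converts the problem, which is phrased in terms of deterministic ratios of set-labelings, into a purely structural question: the dispensing number $\vartheta(G)$ is, by its equivalent formulation as the minimum number of edges whose removal lets $G$ admit a prime AIASI, exactly the minimum number of edges whose deletion makes $G$ bipartite.

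Next I would set up the correspondence between edge deletion and bipartite subgraphs. If $F \subseteq E(G)$ is a set of edges whose removal makes $G$ bipartite, then the spanning subgraph $G-F$ is bipartite and has exactly $|E(G)|-|F|$ edges; conversely, every bipartite subgraph $H$ of $G$ on the same vertex set arises by deleting the $|E(G)|-|E(H)|$ edges of $G$ not present in $H$. Under this correspondence, minimizing $|F|$ is the same as maximizing $|E(G-F)|=|E(G)|-|F|$, that is, the same as finding a bipartite subgraph of $G$ with as many edges as possible.

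Finally I would conclude: writing $b(G)$ for the number of edges in a bipartite subgraph of $G$ with the maximum possible number of edges, the minimum size of a deletion set $F$ is precisely $|E(G)|-b(G)$, whence $\vartheta(G)=|E(G)|-b(G)$.

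The step I expect to require the most care is the interpretation of the phrase \emph{maximal bipartite subgraph}. For the displayed equality to hold, $b(G)$ must denote the number of edges in a bipartite subgraph that is \emph{maximum} (largest possible edge count), not merely inclusion-maximal, since distinct inclusion-maximal bipartite subgraphs may have different edge counts while the dispensing number pairs only with the largest of these. I would therefore make this reading explicit and verify that an extremal (minimum) deletion set corresponds exactly to a maximum-edge bipartite subgraph, so that neither the existence nor the well-definedness of $b(G)$ is left in doubt.
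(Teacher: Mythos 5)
Your proposal is correct and follows essentially the same route as the paper: both arguments reduce the problem via Theorem \ref{T-PIASI-g} to finding a bipartite subgraph of $G$ with the largest possible number of edges, and identify $\vartheta(G)$ with the complementary edge count $|E(G)|-b(G)$. Your explicit remark that ``maximal'' must be read as \emph{maximum} (largest edge count, not merely inclusion-maximal) is a point the paper's own proof glosses over, and is worth making.
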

\begin{proof}
Let $G$ be a non-bipartite graph which has $n$ vertices and $m$ edges. Let $f$ be an arithmetic IASI defined on $G$ which labels the vertices of $G$ in such a way that maximum number of edges have a prime deterministic ratio. This can be done by assigning distinct AP-sets to the vertices of $G$ in such a way that the deterministic index of a vertex (other than the vertex having the smallest deterministic index) is a prime multiple of the common difference of the set-label of some other vertex in $G$. Let $E_p$ be the set of all edges in $G$ which have a prime deterministic ratio in $G$ with respect to $f$. Then, clearly the induced subgraph  $\langle E_p \rangle$ of $G$ is a maximal bipartite subgraph of $G$. The edges in $G-\langle E_p \rangle$ do not have a prime deterministic ratio. Therefore, $\vartheta(G)=|G-\langle E_p \rangle| =|E(G)|-|\langle E_p \rangle|=|E(G)|-b(G)$.
\end{proof}

Invoking Theorem \ref{T-PAIASI1}, we investigate the sparing number about the dispensing number of certain graph classes.

\begin{proposition}\label{P-PAIASI2}
The dispensing number of an odd cycle is $1$.
\end{proposition}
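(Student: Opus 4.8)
The plan is to use Theorem~\ref{T-PAIASI1}, which reduces computing the dispensing number to finding a maximal bipartite subgraph. For an odd cycle $C_n$ (with $n$ odd), I would show that $b(C_n)=n-1$, so that $\vartheta(C_n)=|E(C_n)|-b(C_n)=n-(n-1)=1$. The key point is that an odd cycle is not bipartite, so at least one edge must be removed, while removing exactly one edge suffices.

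First I would argue the lower bound $\vartheta(C_n)\ge 1$: since an odd cycle is not bipartite, by Theorem~\ref{T-PIASI-g} it does not admit a prime AIASI, so at least one edge must fail to have a prime deterministic ratio; equivalently $b(C_n)<n$. Next I would argue the upper bound by exhibiting a bipartite subgraph with $n-1$ edges. Deleting any single edge $e$ from $C_n$ yields the path $P_n$ on $n$ vertices, which is acyclic and hence bipartite with $n-1$ edges. Therefore $b(C_n)\ge n-1$, and combined with $b(C_n)<n=|E(C_n)|$ this forces $b(C_n)=n-1$.

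Substituting into Theorem~\ref{T-PAIASI1} then gives $\vartheta(C_n)=|E(C_n)|-b(C_n)=n-(n-1)=1$, completing the argument. Concretely, one may label the vertices of the path obtained after deleting $e$ by AP-sets whose deterministic indices realise prime deterministic ratios along every remaining edge, so that only the single deleted edge $e$ fails the prime condition.

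The main obstacle, if any, is purely the bipartiteness bookkeeping: one must confirm that no bipartite subgraph of $C_n$ can contain all $n$ edges (which is exactly the non-bipartiteness of the odd cycle) and that $n-1$ edges are genuinely achievable (which is immediate since a path is bipartite). Since both facts are standard, the proof is short, and the essential content is the correct invocation of Theorem~\ref{T-PAIASI1} together with the elementary observation that an odd cycle becomes a path upon deletion of one edge.
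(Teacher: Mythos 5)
Your proposal is correct and follows essentially the same route as the paper's proof: both argue that the odd cycle is not bipartite (so at least one edge must be removed) and that deleting any single edge leaves a path, which is bipartite, hence one removal suffices. You merely phrase this more explicitly through the formula $\vartheta(C_n)=|E(C_n)|-b(C_n)$ of Theorem~\ref{T-PAIASI1}, which the paper leaves implicit.
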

\begin{proof}
Let $C_n$ be an odd cycle. Since no bipartite graphs contain odd cycles, $C_n$ is not a bipartite graphs. Let $e$ be any edge of $C_n$. Then, $C_n-e$ is a path on $n$ vertices, which is a bipartite graph. Therefore, for odd $n$, $\vartheta(C_n)=1$.
\end{proof}

\begin{theorem}\label{T-PAIASI-3}
The dispensing number of a complete graph $K_n$ is
\begin{equation*}
\vartheta(K_n)=
\begin{cases}
\frac{1}{4}n(n-2) &  \text{if $n$ is even}\\
\frac{1}{4}(n-1)^2 &  \text{if $n$ is odd}.
\end{cases}
\end{equation*} 
\end{theorem}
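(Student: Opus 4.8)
The plan is to invoke Theorem \ref{T-PAIASI1}, which reduces the computation of $\vartheta(K_n)$ to determining $b(K_n)$, the number of edges in a maximal bipartite subgraph of $K_n$. Since $K_n$ has $|E(K_n)|=\binom{n}{2}=\frac{n(n-1)}{2}$ edges, once $b(K_n)$ is known the result follows by the single subtraction $\vartheta(K_n)=\binom{n}{2}-b(K_n)$. So the whole problem is to locate a maximal bipartite subgraph of $K_n$ and count its edges.

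First I would observe that any bipartite subgraph of $K_n$ arises from a partition of the $n$ vertices into two classes $X$ and $Y$, with the admissible edges being exactly those joining a vertex of $X$ to a vertex of $Y$. Because $K_n$ is complete, every one of the $|X|\cdot|Y|$ cross edges is actually present in $K_n$, so the largest bipartite subgraph compatible with the bipartition $(X,Y)$ is the complete bipartite graph $K_{|X|,|Y|}$, contributing $|X|\cdot|Y|$ edges. Consequently $b(K_n)=\max\{\,ab : a+b=n,\ a,b\ge 0\,\}$, and the search for a maximal bipartite subgraph becomes an elementary optimization over the sizes of the two colour classes.

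Next I would maximize the product $a(n-a)$ over integers $0\le a\le n$. This is a concave quadratic in $a$ peaking at $a=n/2$. For even $n$ the maximum is attained at $a=b=\frac{n}{2}$, giving $b(K_n)=\frac{n^2}{4}$; for odd $n$ the nearest admissible values $a=\frac{n-1}{2},\ b=\frac{n+1}{2}$ give $b(K_n)=\frac{n^2-1}{4}$. Substituting into $\vartheta(K_n)=\binom{n}{2}-b(K_n)$ and simplifying yields $\frac{n(n-2)}{4}$ in the even case and $\frac{(n-1)^2}{4}$ in the odd case, matching the claimed formula.

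The one point needing care -- rather than any deep obstacle -- is the justification that a maximal bipartite subgraph of $K_n$ must be a balanced complete bipartite graph. The completeness of $K_n$ is exactly what makes the edge count of a bipartite subgraph depend only on the two class sizes, so the problem collapses to the concave optimization of $a(n-a)$; from this concavity it is immediate that no unbalanced, and a fortiori no non-complete, bipartite subgraph can exceed the balanced $K_{\lfloor n/2\rfloor,\lceil n/2\rceil}$. Everything else is the routine arithmetic indicated above.
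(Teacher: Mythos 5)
Your proposal is correct and follows essentially the same route as the paper: both identify the maximal bipartite subgraph of $K_n$ as the balanced complete bipartite graph $K_{a,b}$ with $a+b=n$, maximize over the class sizes, and subtract from $\binom{n}{2}$. Your explicit concavity argument for why the balanced split is optimal is slightly more careful than the paper's bare assertion, but the substance is identical.
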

\begin{proof}
The maximal bipartite subgraph of $K_n$ is a complete bipartite graph $K_{a,b}$, where $a+b=n$. The number of edges in $K_n-K_{a,b}$ is $\frac{1}{2}(a+b)(a+b-1)-ab=\frac{1}{2}(a^2+b^2-a-b)=\frac{1}{2}[a(a-1)+b(b-1)]$. Then, we have the following cases.

\noindent {\em Case-1:} Let $n$ be an even integer. Then, the number of edges in $K_{a,b}$ is maximum if $a=b=\frac{n}{2}$. Hence, by Theorem \ref{T-PIASI-g}, the dispensing number of $K_n$ is $\frac{1}{2}[\frac{n}{2}(\frac{n}{2}-1)+\frac{n}{2}(\frac{n}{2}-1)]=\frac{1}{4}n(n-2)$. 

\noindent {\em Case-2:} Let $n$ be an odd integer. Then, the number of edges in $K_{a,b}$ is maximum if $a=\lfloor \frac{n}{2}\rfloor= \frac{n+1}{2}$ and $b=\lceil \frac{n}{2} \rceil =\frac{n+1}{2}$ for odd $n$. Then, by Theorem \ref{T-PIASI-g}, the dispensing number of $K_n$ is $\frac{1}{2}[\frac{n-1}{2}(\frac{n-1}{2}-1)+\frac{n+1}{2}(\frac{n+1}{2}-1)]=\frac{1}{4}(n-1)^2$. This completes the proof.
\end{proof}

The union of two graphs $G_1$ and $G_2$, the {\em union} of $G_1\cup G_2$ is the graph whose vertex set is $V(G_1)\cup V(G_2)$ and the edge set $E(G_1)\cup E(G_2)$. If two graphs $G_1$ and $G_2$ have no common elements, then their union is said to be the {\em disjoint union}. The following theorem estimates the dispensing number of the union of two graphs. 

Another interesting graph whose vertex set has two partitions is a split graph. A {\em split graph}  is a graph whose vertex has two partitions of which one is an independent set, say $S$ and the subgraph induced by the other is a block, say $K_r$. A split graph is said to be a {\em complete split graph} if every vertex of the independent set $S$ is adjacent to all vertices of the block $K_r$. 

Invoking Theorem \ref{T-PAIASI-3}, we estimate the dispensing number of a split garaph and complete split graph in the following theorem.

\begin{theorem}
Let $G$ be a split graph with a block $K_r$ and an independent set $S$, where $K_r\cup \langle S \rangle$. Then,
\begin{equation*}
\vartheta(G)=
\begin{cases}
\frac{1}{4}r(r-2) &  \text{if $r$ is even}\\
\frac{1}{4}(r-1)^2 &  \text{if $r$ is odd}.
\end{cases}
\end{equation*}
\end{theorem}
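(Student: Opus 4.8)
The plan is to reduce the problem for a split graph to the complete-graph case already settled in Theorem \ref{T-PAIASI-3}. The key observation is that the dispensing number, by Theorem \ref{T-PAIASI1}, equals $|E(G)|-b(G)$, where $b(G)$ is the number of edges in a maximal bipartite subgraph of $G$. Thus I would first analyze the structure of a split graph $G$ whose vertex set partitions into an independent set $S$ and a clique $K_r$. Every edge of $G$ lies either inside the clique $K_r$ or joins a vertex of $S$ to a vertex of $K_r$; there are no edges within $S$ since $S$ is independent.

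The central claim to establish is that the obstruction to bipartiteness in a split graph comes \emph{entirely} from the clique $K_r$. First I would note that any edge of the form $sv$ with $s\in S$ and $v\in K_r$ can always be retained in a bipartite subgraph: since $S$ is independent, the bipartite ``core'' can be built so that the entire independent set $S$ sits on one side of the bipartition and need never create an odd cycle on its own. Concretely, take a maximum bipartite subgraph $H_r$ of the clique $K_r$ with bipartition $(A,B)$ where $A\cup B = V(K_r)$; by Theorem \ref{T-PAIASI-3} this retains all but $\tfrac14 r(r-2)$ (resp.\ $\tfrac14 (r-1)^2$) edges of $K_r$. Then I would extend this to a bipartite subgraph of $G$ by placing all of $S$ on one side of the bipartition $(A\cup S, B)$, say, and retaining every $S$--$K_r$ edge that respects this bipartition. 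The step requiring care is to verify that \emph{all} the cross edges between $S$ and $K_r$ can indeed be retained simultaneously; since each $s\in S$ has no neighbours in $S$, its incident edges go only to $K_r$, and we are free to two-colour $S$ consistently with the fixed bipartition of $K_r$, so no $S$--$K_r$ edge need be deleted.

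Consequently the only edges that must be removed to reach a maximal bipartite subgraph are those lost inside $K_r$ itself. I would then argue the matching lower bound: every maximal bipartite subgraph of $G$ induces a bipartite subgraph on the clique vertices $V(K_r)$, and by the extremal computation in Theorem \ref{T-PAIASI-3} at least $\tfrac14 r(r-2)$ (resp.\ $\tfrac14 (r-1)^2$) edges of $K_r$ must be discarded. Combining the upper and lower bounds gives $\vartheta(G)=|E(G)|-b(G)$ equal to precisely the number of clique edges removed, which is exactly the expression $\vartheta(K_r)$ from Theorem \ref{T-PAIASI-3}, yielding the two cases according to the parity of $r$.

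The main obstacle, and the point deserving the most careful justification, is the claim that the $S$--$K_r$ edges impose no additional bipartiteness cost, i.e.\ that an optimal bipartition of $K_r$ can always be extended to $G$ without sacrificing any cross edge. This hinges on the independence of $S$: since no odd cycle can pass through two vertices of $S$ consecutively, the only odd cycles in $G$ must use at least two clique edges, so destroying all ``excess'' edges within $K_r$ simultaneously destroys every odd cycle of $G$. I would make this precise by showing that once $K_r$ is rendered bipartite, the whole graph $G$ becomes bipartite, since any cycle either lies in the now-bipartite clique portion or alternates into $S$ and back in a way that preserves even length.
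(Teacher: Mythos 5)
Your overall strategy is the same as the paper's: reduce to the clique and invoke Theorem \ref{T-PAIASI-3}. The lower-bound half of your argument is sound --- the restriction of any bipartite subgraph of $G$ to $V(K_r)$ is bipartite, so at least $\vartheta(K_r)$ clique edges must be discarded, giving $\vartheta(G)\ge \vartheta(K_r)$. The gap is in the upper bound, at precisely the step you yourself flagged as ``requiring care'': the claim that every $S$--$K_r$ edge can be retained. Being free to two-colour $S$ does not help a vertex $s\in S$ whose neighbourhood meets \emph{both} sides of the fixed bipartition $(A,B)$ of $K_r$; wherever you place $s$, all of its edges into one of the two sides are lost. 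Your closing argument also fails in two places: an odd cycle need only use \emph{one} clique edge (the triangle $s,a,b$ with $s\in S$ uses only the clique edge $ab$), and rendering $K_r$ bipartite does not destroy such a triangle when the edge $ab$ happens to survive in the chosen bipartite subgraph of $K_r$.

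Concretely, take $r=3$ with clique $\{a,b,c\}$ and $S=\{s_1,s_2\}$, each $s_i$ adjacent to all of $a,b,c$ (a complete split graph). The claimed formula gives $\vartheta(G)=\frac{1}{4}(3-1)^2=1$, but $G$ has $9$ edges and its largest bipartite subgraph has $6$ (achieved by the bipartition $(\{s_1,s_2\},\{a,b,c\})$; checking all bipartitions shows none does better), so $\vartheta(G)=3$. Even $K_4$, viewed as a split graph with $r=3$ and $S$ a single vertex, gives $1$ from the formula versus the true value $2$. So the statement itself fails once cross edges are present; it holds only under the literal reading of the hypothesis ``$G=K_r\cup\langle S\rangle$'' as a disjoint union with no $S$--$K_r$ edges, in which case it is an immediate consequence of Theorem \ref{T-PAISAI4}. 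The paper's own proof makes the same unjustified leap (it simply asserts that the maximal bipartite subgraph of $G$ is obtained by deleting edges from the block $K_r$ only), so you have reproduced its reasoning faithfully --- but the justification you supply for the critical step does not close the gap, and no justification can without restricting the cross edges.
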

\begin{proof}
Since $S$ is an independent set in $G$, the maximal bipartite subgraph of $G$ is obtained by eliminating required number of edges from the block $K_r$ only. Since $K_r$ is a complete graph in $G$, by Theorem \ref{T-PAIASI-3}, the minimum number of edges to be removed from $K_r$ so that it becomes a bipartite graph is $\frac{1}{4}r(r-2)$ if $r$ even and $\frac{1}{4}(r-1)^2$ if $r$ is even.
Therefore, 
\begin{equation*}
\vartheta(G)=
\begin{cases}
\frac{1}{4}r(r-2) &  \text{if $r$ is even}\\
\frac{1}{4}(r-1)^2 &  \text{if $r$ is odd}.
\end{cases}
\end{equation*}
\end{proof}

\noindent Next, we proceed to determine the dispensing number of the union of two graphs.

\begin{theorem}\label{T-PAISAI4}
Let $G_1$ and $G_2$ be two given IASI graphs. Then, $\vartheta(G_1\cup G_2)=\vartheta(G_1)+\vartheta(G_2)-\vartheta(G_1\cap G_2)$. In particular, if $G_1$ and $G_2$ are two edge-disjoint graphs, then $\vartheta(G_1\cup G_2)=\vartheta(G_1)+\vartheta(G_2)$. 
\end{theorem}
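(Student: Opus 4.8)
The plan is to push everything through the closed form for the dispensing number given by Theorem~\ref{T-PAIASI1}, namely $\vartheta(H)=|E(H)|-b(H)$, where $b(H)$ is the number of edges of a maximal bipartite subgraph of $H$. Applying this to $H=G_1\cup G_2$, $G_1$, $G_2$ and $G_1\cap G_2$, and using the set identities $E(G_1\cup G_2)=E(G_1)\cup E(G_2)$ and $E(G_1\cap G_2)=E(G_1)\cap E(G_2)$, ordinary inclusion--exclusion on the edge sets gives $|E(G_1\cup G_2)|=|E(G_1)|+|E(G_2)|-|E(G_1\cap G_2)|$. Substituting the four expressions into the claimed identity and cancelling the edge-count terms, the whole statement collapses to the single equation $b(G_1\cup G_2)=b(G_1)+b(G_2)-b(G_1\cap G_2)$. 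So the genuine content is an inclusion--exclusion law for the size of a maximal bipartite subgraph, and this is the step I would concentrate on.

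To analyse $b$ I would use its description as a maximum cut: a maximal bipartite subgraph of $H$ is cut out by a bipartition $(X,Y)$ of $V(H)$, keeping precisely the edges with one end in each part, so $b(H)=\max_{(X,Y)}\mathrm{cut}_H(X,Y)$, where $\mathrm{cut}_H(X,Y)$ counts the crossing edges. For one fixed bipartition $(X,Y)$ of $V(G_1\cup G_2)$, sorting each crossing edge according to whether it lies in $G_1$, in $G_2$, or in both gives the pointwise identity $\mathrm{cut}_{G_1\cup G_2}(X,Y)=\mathrm{cut}_{G_1}(X,Y)+\mathrm{cut}_{G_2}(X,Y)-\mathrm{cut}_{G_1\cap G_2}(X,Y)$, since whether an edge crosses depends only on the colours of its endpoints. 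If a single bipartition could be found that simultaneously maximises the cut of $G_1$, of $G_2$, and of $G_1\cap G_2$, then this pointwise identity would immediately upgrade to the desired equation for $b$, and the theorem would follow.

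The main obstacle is exactly the existence of such a simultaneously optimal bipartition. The maximal bipartite subgraphs of $G_1$ and of $G_2$ are chosen independently, and on the shared part $G_1\cap G_2$ their induced bipartitions need not agree; when they conflict, no common partition attains all three maxima and the inclusion--exclusion law for $b$ can fail. The one case where this difficulty disappears cleanly is the disjoint union: if $G_1$ and $G_2$ are vertex-disjoint then $G_1\cap G_2$ is empty, $b(G_1\cap G_2)=0$, and optimal bipartitions of the two disjoint vertex sets combine without interference, giving $b(G_1\cup G_2)=b(G_1)+b(G_2)$ and hence $\vartheta(G_1\cup G_2)=\vartheta(G_1)+\vartheta(G_2)$. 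I would therefore prove the additive ``in particular'' clause in full for disjoint unions, and flag that for graphs which merely share vertices the compatibility of the optimal bipartitions on $G_1\cap G_2$ is a genuine hypothesis rather than an automatic fact.
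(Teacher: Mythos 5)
Your route is genuinely different from the paper's: you reduce everything through the closed form $\vartheta(H)=|E(H)|-b(H)$ of Theorem~\ref{T-PAIASI1} and recast the claim as an inclusion--exclusion law for maximum cuts, whereas the paper relabels $G_1\cup G_2$ by combining the two set-indexers and then decomposes $G_1\cup G_2$ into the three edge-disjoint pieces $G_1-G_1\cap G_2$, $G_2-G_1\cap G_2$ and $G_1\cap G_2$, applying additivity over edge-disjoint unions and then asserting $\vartheta(G_1-G_1\cap G_2)+\vartheta(G_1\cap G_2)=\vartheta(G_1)$. Your reduction is the more transparent one, and the obstruction you isolate --- that the optimal bipartitions of $G_1$ and $G_2$ need not agree on $G_1\cap G_2$ --- is exactly the gap hidden in the paper's last step, since splitting $\vartheta(G_1)$ across $G_1-G_1\cap G_2$ and $G_1\cap G_2$ presupposes a bipartition simultaneously optimal for both pieces. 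Your caution is justified: the general identity is false. Take $G_1$ and $G_2$ to be triangles sharing one edge $ab$; then $\vartheta(G_1)=\vartheta(G_2)=1$ and $\vartheta(G_1\cap G_2)=0$, so the formula predicts $2$, but deleting $ab$ alone leaves a $4$-cycle, so $\vartheta(G_1\cup G_2)=1$.

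One point where you are more careful than the statement itself, and rightly so: the ``in particular'' clause as written assumes only \emph{edge}-disjointness, but your proof of additivity uses \emph{vertex}-disjointness, and that stronger hypothesis is genuinely needed. Writing $K_5$ as the union of two edge-disjoint $5$-cycles gives $\vartheta(K_5)=4$ by Theorem~\ref{T-PAIASI-3}, while the sum of the parts is $1+1=2$. So the only portion of the theorem that survives is the one you prove in full --- additivity over vertex-disjoint unions --- and your flagging of the compatibility of optimal bipartitions on $G_1\cap G_2$ as an unproved hypothesis is not excess caution but a correct identification of a counterexample-bearing gap in the statement.
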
 
\begin{proof}
Let $G_1$ and $G_2$ admit arithmetic IASIs, say $f_1$ and $f_2$ respectively such that $f_1(v_i)\ne f_2(w_j)$ for any two distinct vertices $v_i\in V(G_1)$ and $w_j\in V(G_2)$. Define a function $f: V(G_1\cup G_2)\to \mathcal{P}(\mathbb{N}_0)$ as follows.
\begin{equation*}
f(v)=
\begin{cases}
f_1(v) & \text{if}~~ v\in V(G_1)\\
f_2(v) & \text{if}~~ v\in V(G_2).
\end{cases}
\end{equation*}

If $G_1$ and $G_2$ are edge-disjoint, no edge in $G_1$ has the same set-label of a vertex in $G_2$. Hence $f$ is injective. Therefore, $\vartheta(G_1\cup G_2)=\vartheta(G_1)+\vartheta(G_2)$.

If $G_1$ and $G_2$ have some edges in common, then $G_1\cap G_2$ is the graph induced by these edges. Then, $f_1=f_2$ for all the vertices in $G_1\cap G_2$.  Therefore, the graphs $G_1-G_1\cap G_2$, $G_2-G_1\cap G_2$ and $G_1\cap G_2$ are disjoint graphs whose union is the graph $G_1\cup G_2$. Hence, $G_1\cup G_2 =(G_1-G_1\cap G_2)\cup (G_2-G_1\cap G_2)\cup (G_1\cap G_2)$. Then, by the above argument, 
$\vartheta(G_1\cup G_2) =\vartheta(G_1-G_1\cap G_2)+ \vartheta(G_2-G_1\cap G_2)+ \vartheta(G_1\cap G_2)=\vartheta(G_1)+\vartheta(G_2)-\vartheta(G_1\cap G_2)$. This completes the proof.
\end{proof}

Invoking Theorem \ref{T-PAISAI4}, we establish the following theorem on the dispensing number of Eulerian graphs.

\begin{theorem}
The dispensing number of an Eulerian graph $G$ is the number of odd cycles in $G$.
\end{theorem}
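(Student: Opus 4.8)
The plan is to combine the cycle structure of Eulerian graphs with the additivity of the dispensing number over edge-disjoint unions. The starting observation is structural: a connected graph in which every vertex has even degree admits a partition of its edge set into edge-disjoint cycles (the classical cycle-decomposition theorem for Eulerian graphs). Hence I would write $G=C_1\cup C_2\cup\cdots\cup C_k$, where the $C_i$ are pairwise edge-disjoint cycles whose union is all of $G$, and classify them by parity of their length.

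Next I would invoke Theorem \ref{T-PAISAI4}. Its edge-disjoint case asserts $\vartheta(G_1\cup G_2)=\vartheta(G_1)+\vartheta(G_2)$, and a routine induction extends this to any finite edge-disjoint family, giving $\vartheta(G)=\sum_{i=1}^{k}\vartheta(C_i)$. The induction step is legitimate because deleting one cycle from an edge-disjoint family leaves the remaining cycles edge-disjoint, so at each stage the two pieces being combined are genuinely edge-disjoint.

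The final step evaluates each summand. Every even cycle is bipartite, so by Theorem \ref{T-PIASI-g} it already admits a prime AIASI and therefore $\vartheta(C_i)=0$ whenever $C_i$ has even length. For an odd cycle, Proposition \ref{P-PAIASI2} gives $\vartheta(C_i)=1$. Substituting these two values into the sum, each even cycle contributes nothing and each odd cycle contributes exactly $1$, so $\vartheta(G)$ equals the number of odd cycles occurring in the decomposition.

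The main obstacle is the passage from \emph{a} decomposition to \emph{the} number of odd cycles in $G$: for the statement to be well posed, the count of odd cycles must be independent of which edge-disjoint cycle decomposition is chosen. Since the left-hand side $\vartheta(G)$ is defined intrinsically, the chain of equalities above forces this count to be a decomposition invariant, so I would either record this invariance as a consequence of the theorem or verify it directly, for instance via the parity relation $|E(G)|\equiv r\pmod 2$, where $r$ denotes the number of odd cycles in the decomposition. This is the delicate point where care is required, and it is worth isolating explicitly rather than absorbing silently into the computation.
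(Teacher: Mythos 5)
Your argument is essentially identical to the paper's: decompose the Eulerian graph into edge-disjoint cycles, apply the additivity of $\vartheta$ over edge-disjoint unions from Theorem \ref{T-PAISAI4}, and evaluate each summand as $0$ for even cycles (Theorem \ref{T-PIASI-g}) and $1$ for odd cycles (Proposition \ref{P-PAIASI2}). Your closing remark on the decomposition-independence of the odd-cycle count is a worthwhile precision that the paper passes over silently, but it does not change the route of the proof.
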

\begin{proof}
Let $G$ be an Eulerian graph. Then, $G$ can be decomposed into a finite number of edge disjoint cycles. Let $C_{n_1},C_{n_2},C_{n_3},\ldots, C_{n_l}$ be the edge disjoint cycles in $G$ such that $G=\cup_{i=1}^lC_{n_i}$. Without loss of generality let $C_{n_1}, C_{n_2},C_{n_3}\ldots C_{n_r}$ are odd cycles $C_{n_{r+1}},C_{n_{r+2}},\ldots, C_{n_l}$ are even cycles. By Theorem \ref{T-PIASI-g}, the dispensing number of even cycles is $0$ and by Proposition \ref{P-PAIASI2}, the dispensing number of odd cycles is $1$. Since The cycles in $G$ are edge disjoint, all odd cycles in $G$ has one edge that has no prime deterministic ratio. Therefore, by Theorem \ref{T-PAISAI4}, the dispensing number of $G$ is equal to the number of odd cycles in $G$. 
\end{proof}

A {\em catus} $G$ is a connected graph in which any two simple cycles have at most one vertex in common. That is, every edge in a cactus belongs to at most one simple cycle. The cycles in a cactus are edge disjoint. Hence, we have

\begin{theorem}
The dispensing number of a cactus $G$ is the number of odd cycles in $G$.
\end{theorem}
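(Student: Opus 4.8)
The plan is to follow the template of the preceding Eulerian result, exploiting the defining property that the cycles of a cactus are pairwise edge-disjoint. First I would classify the edges of $G$ into two types: those lying on some (necessarily unique) simple cycle, and the remaining \emph{bridge} edges that lie on no cycle. The bridge edges cannot form a cycle among themselves, so the subgraph $F$ they induce is acyclic, hence a forest; being bipartite, it admits a prime AIASI by Theorem \ref{T-PIASI-g}, so that $\vartheta(F)=0$.

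Next I would write $G$ as the edge-disjoint union $G=C_{n_1}\cup C_{n_2}\cup\cdots\cup C_{n_k}\cup F$, where $C_{n_1},\dots,C_{n_k}$ are the simple cycles of $G$. This decomposition is precisely what the cactus condition guarantees to be edge-disjoint. Applying Theorem \ref{T-PAISAI4} repeatedly (an easy induction on the number of summands) yields $\vartheta(G)=\sum_{i=1}^{k}\vartheta(C_{n_i})+\vartheta(F)$. By Proposition \ref{P-PAIASI2} each odd cycle contributes $1$ to this sum, while by Theorem \ref{T-PIASI-g} each even cycle and the forest $F$ contribute $0$. Therefore $\vartheta(G)$ equals the number of odd cycles of $G$, as claimed.

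The step I expect to be the main obstacle is justifying the repeated use of Theorem \ref{T-PAISAI4}. That theorem is stated for edge-disjoint graphs, but in a cactus two cycles (or a cycle and the forest) may still share a cut vertex, so the pieces of my decomposition are edge-disjoint yet not vertex-disjoint. I would need to check that the additivity argument survives the presence of shared vertices: concretely, that the labelings of the individual pieces can be chosen to agree on each shared cut vertex while keeping all vertex- and edge-labels distinct across the pieces, and without forcing any non-prime deterministic ratio at that vertex.

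A cleaner alternative, which sidesteps this subtlety entirely, is to argue directly from Theorem \ref{T-PAIASI1}. To make a cactus bipartite one must destroy every odd cycle; since the cycles are edge-disjoint, deleting a single edge from each odd cycle both suffices (even cycles and bridges are already bipartite) and is optimal (edge-disjointness means no one deletion can simultaneously break two odd cycles). Hence a maximal bipartite subgraph omits exactly one edge per odd cycle, and $\vartheta(G)=|E(G)|-b(G)$ equals the number of odd cycles.
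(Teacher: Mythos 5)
Your main argument is the same decomposition-plus-additivity route the paper takes: split the cactus into its pairwise edge-disjoint cycles, apply Proposition~\ref{P-PAIASI2} to the odd ones, Theorem~\ref{T-PIASI-g} to the even ones, and Theorem~\ref{T-PAISAI4} to sum the contributions. You are, however, more careful than the paper on two points. First, the paper's proof silently ignores the bridge edges of the cactus (a cactus is generally not a disjoint union of its cycles); your explicit inclusion of the acyclic remainder $F$ with $\vartheta(F)=0$ closes that gap. Second, you correctly flag that Theorem~\ref{T-PAISAI4} as proved really addresses edge-disjointness while the pieces of a cactus share cut vertices, a subtlety the paper does not acknowledge; your fallback argument via Theorem~\ref{T-PAIASI1} --- that a maximal bipartite subgraph must omit at least one edge from each odd cycle and, by edge-disjointness, exactly one suffices, so $\vartheta(G)=|E(G)|-b(G)$ equals the number of odd cycles --- is self-contained, avoids the union theorem entirely, and is arguably the cleaner proof. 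Both routes are sound; the second buys you independence from the labeling-gluing issue at cut vertices.
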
 
\begin{proof}
Let $G$ be a cactus. Let $C_{n_1},C_{n_2},C_{n_3},\ldots, C_{n_l}$ be cycles in $G$. All these cycles are edge disjoint. Without loss of generality let $C_{n_1}, C_{n_2},C_{n_3}\ldots C_{n_r}$ are odd cycles $C_{n_{r+1}},C_{n_{r+2}},\ldots, C_{n_l}$ are even cycles. By Theorem \ref{T-PIASI-g}, the dispensing number of even cycles is $0$ and by Proposition \ref{P-PAIASI2}, the dispensing number of odd cycles is $1$. Then, all odd cycles in $G$ has one edge that has no prime deterministic ratio. Therefore, by Theorem \ref{T-PAISAI4}, the dispensing number of $G$ is equal to the number of odd cycles in $G$. 
\end{proof}

Another graph class similar to cactus graphs is a {\em block graph} (or a {\em clique tree}) which is defined as an undirected graph in which every biconnected component (block) is a clique. The following theorem estimates the dispensing number of a block graph

\begin{theorem}
The dispensing number of a block graph $G$ is the sum of the dispensing numbers of the cliques in $G$.
\end{theorem}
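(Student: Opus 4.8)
The plan is to exploit the defining structure of a block graph together with the edge-disjoint union formula of Theorem \ref{T-PAISAI4}. First I would recall the basic structural fact about blocks in an arbitrary graph: two distinct blocks (biconnected components) can share at most one vertex, and any such shared vertex is necessarily a cut vertex. In particular, distinct blocks share no edges. Since in a block graph every block is a clique, this means the cliques $K_{r_1}, K_{r_2}, \ldots, K_{r_l}$ that constitute the blocks of $G$ are pairwise edge-disjoint, and their edge sets partition $E(G)$. Hence $G = \bigcup_{i=1}^{l} K_{r_i}$ is an edge-disjoint union of these cliques.

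Next I would extend the edge-disjoint case of Theorem \ref{T-PAISAI4} from two graphs to a finite family by induction. Setting $H_j = \bigcup_{i=1}^{j} K_{r_i}$, the graph $H_j$ and the next block $K_{r_{j+1}}$ share no edges (they may share a single cut vertex, but no edge), so they are edge-disjoint; applying Theorem \ref{T-PAISAI4} then gives $\vartheta(H_{j+1}) = \vartheta(H_j) + \vartheta(K_{r_{j+1}})$. Iterating from $H_1 = K_{r_1}$ up to $H_l = G$ yields $\vartheta(G) = \sum_{i=1}^{l} \vartheta(K_{r_i})$, which is exactly the claimed sum over the cliques of $G$. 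If a closed-form value is desired, each summand $\vartheta(K_{r_i})$ is supplied by Theorem \ref{T-PAIASI-3}.

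The step I expect to demand the most care is justifying that the blocks of $G$ are genuinely edge-disjoint and that $G$ is precisely their union, so that the hypotheses of the edge-disjoint case of Theorem \ref{T-PAISAI4} are met at every stage of the induction. This rests on the standard block-decomposition of a graph, which I would invoke explicitly: every edge of $G$ lies in a unique block, so the block cliques cover all edges without overlap. Once this is in place, the remainder is routine, being nothing more than repeated application of the additivity in Theorem \ref{T-PAISAI4}; no new labeling argument is required beyond what that theorem already furnishes.
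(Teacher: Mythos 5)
Your proposal is correct and follows essentially the same route as the paper: observe that the blocks of a block graph are pairwise edge-disjoint cliques whose union is $G$, and then apply the edge-disjoint case of Theorem \ref{T-PAISAI4} repeatedly to obtain $\vartheta(G)=\sum_{i=1}^{l}\vartheta(K_{r_i})$. Your version is slightly more careful in making the inductive extension from two graphs to $l$ graphs explicit, but the underlying argument is identical.
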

\begin{proof}
Let $K_{n_1},K_{n_2},K_{n_3},\ldots K_{n_l}$ be the cliques in $G$. Since $G$ is a block graph, then any two cliques in it hs at most one common vertex. That is, all cliques in $G$ are edge disjoint. then, by Theorem \ref{T-PAISAI4}, $\vartheta(G)=\vartheta(\cup_{i=1}^l K_{n_i}) = \sum_{i=1}^{l}\vartheta(K_{n_i})$.
\end{proof}

\section{Conclusion}

In this paper, we have discussed some characteristics of graphs which admit a certain types of IASIs called prime arithmetic IASI. We have established some conditions for some graph classes to admit this types of arithmetic IASIs and have explained certain properties and characteristics of prime arithmetic IASI graphs. Some of the problems in this area we have identified are the following.

\begin{problem}{\rm A cactus is a subclass of outerplanar graphs. An {\em outerplanar graph} is a planar graph all of whose vertices belong to the exterior unbounded plane of the graph. In an outer planar, two simple cycles can have at most one edge in common. Finding the dispensing number of outer planar graphs is an open problem.}
\end{problem}

\begin{problem}{\rm Estimating the dispensing number of graph operations such as graph joins, graph complements etc.is also an open problem.}
\end{problem}

\begin{problem}{\rm The problem of finding the dispensing number of various graph products such as Cartesian products, strong products, lexicographic products, corona etc. is  open.}
\end{problem}

\begin{problem}{\rm The problem of finding the dispensing number of various graph classes such as bisplit graphs, sun graphs etc. is  open.}
\end{problem}

Problems related to the characterisation of different arithmetic IASI graphs are still open. The IASIs under which the vertices of a given graph are labeled by different standard sequences of non negative integers, are also worth studying.   The problems of establishing the necessary and sufficient conditions for various graphs and graph classes to admit certain IASIs still remain unsettled. All these facts highlight a wide scope for further studies in this area.

\end{document}